\documentclass[12pt]{amsart}

\usepackage{amsfonts, amsthm, amsmath, amssymb}
\usepackage{hyperref}
\hypersetup{colorlinks=false}

\usepackage[margin=1.25in]{geometry}

\usepackage{helvet}

\newtheorem{theorem}{Theorem}
\newtheorem{Lemma}{Lemma}

\theoremstyle{definition}

\newtheorem{remark}{Remark}

\renewcommand{\d}{\mathrm{d}}
\renewcommand{\phi}{\varphi}
\renewcommand{\rho}{\varrho}

\markboth{Ritabrata Munshi}{Subconvexity for $GL(3)\times GL(2)$ $L$-functions}
\title[Subconvexity for $GL(3)\times GL(2)$ $L$-functions]{Subconvexity for $GL(3)\times GL(2)$ $L$-functions in $t$-aspect}

\author{Ritabrata\ Munshi}
\address{School of Mathematics\\ 
Tata Institute of Fundamental Research\\
1 Homi Bhabha Road\\ Colaba\\Mumbai 400005\\ India}
\curraddr{Statistics and Mathematics Unit\\ Indian Statistical Institute\\ 203 B.T. Road\\ Kolkata 700108\\ India} 
\email{rmunshi@math.tifr.res.in, ritabratamunshi@gmail.com}

\subjclass[2010]{11F66}
\keywords{subconvexity, Rankin-Selberg $L$-functions, $GL(3)$ Maass forms}


\begin{document}

\begin{abstract}
Let $\pi$ be a Hecke-Maass cusp form for $SL(3,\mathbb Z)$ and $f$ be a holomorphic (or Maass) Hecke form for $SL(2,\mathbb{Z})$. In this paper we prove the following subconvex bound
$$
L\left(\tfrac{1}{2}+it,\pi\times f\right)\ll_{\pi,f,\varepsilon} (1+|t|)^{\frac{3}{2}-\frac{1}{42}+\varepsilon}.
$$
\end{abstract}

\maketitle


\section{Introduction}
\label{intro}

For $\pi$ a Hecke-Maass cusp form for $SL(3,\mathbb Z)$, and $f$ a holomorphic Hecke cusp form for $SL(2,\mathbb{Z})$ the associated Rankin-Selberg $L$-series is given by 
$$
L(s,\pi\times f)=\mathop{\sum\sum}_{n,r=1}^\infty \frac{\lambda_\pi(n,r)\lambda_f(n)}{(nr^2)^{s}},
$$
in the half plane $\sigma>1$. (Here $\lambda_\pi$ and $\lambda_f$ are the normalized Fourier coefficients of the forms.) This series extends to an entire function and satisfies a functional equation of the Riemann type $s\mapsto 1-s$ with a gamma factor of `degree six'. This particular $L$-function plays a crucial role in quantum chaos (see \cite{Sar}), and hence it is important to study its deeper analytic properties. In particular one seeks to understand the size of these functions inside the critical strip. A standard consequence of the functional equation is the easy convexity bound 
$$
L\left(\tfrac{1}{2}+it,\pi\times f\right)\ll_{\pi,f,\varepsilon} (1+|t|)^{\frac{3}{2}+\varepsilon}.
$$
The Lindel\"{o}f hypothesis predicts that such a bound holds with any positive exponent in place of $3/2+\varepsilon$. But even breaking the convexity barrier is hard and has remained open so far. The purpose of this paper is to prove the following subconvex bound.

\begin{theorem}
\label{mthm}
Let $\pi$ be a Hecke-Maass cusp form for $SL(3,\mathbb Z)$, and $f$ a holomorphic Hecke cusp form for $SL(2,\mathbb{Z})$. Then we have
$$
L\left(\tfrac{1}{2}+it,\pi\times f\right)\ll_{\pi,f,\varepsilon} (1+|t|)^{\frac{3}{2}-\frac{1}{42}+\varepsilon}.
$$
\end{theorem}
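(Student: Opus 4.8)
The plan is to attack this subconvexity bound via Munshi's own "$GL(2)$ circle method" / delta-method with a conductor-lowering trick, which is the natural toolbox for $GL(3)\times GL(2)$ problems in the $t$-aspect. First I would express $L(\tfrac12+it,\pi\times f)$ through an approximate functional equation as a sum of length roughly $N\asymp t^3$ over $n$ (with the $r$-variable handled routinely), so that one must estimate
$$
S(N)=\sum_{n\sim N}\lambda_\pi(n,1)\lambda_f(n)\, n^{-it}\, V(n/N).
$$
Convexity corresponds to $S(N)\ll N^{1/2+\varepsilon}$, and I would aim for a power saving $S(N)\ll N^{1/2-\delta}$ with $\delta$ chosen to produce the exponent $\tfrac32-\tfrac1{42}$. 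The strategy is to separate the oscillation of $\lambda_\pi(n,1)$ from that of $\lambda_f(n)n^{-it}$ by detecting the equation $n=m$ through a delta-symbol expansion, introduce an auxiliary sum over a modulus $q$ in a range to be optimized (the "$GL(2)$" conductor-lowering mechanism, using the multiplicativity $\lambda_f(n)$ and an extra averaging over a parameter to gain room), and then apply Voronoi summation on the $GL(3)$ side in $n$ and the $GL(2)$ Voronoi on the $m$ side, followed by Poisson summation in the modulus/shift variables.

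The key steps, in order, would be: (i) set up the approximate functional equation and reduce to the dyadic sum $S(N)$; (ii) apply the circle method with conductor lowering — write the Kronecker delta detecting $n=m$ via $q\sim Q$ and an integral over a short interval, choosing $Q$ and the conductor-lowering parameter $K$ so that the "diagonal" and "off-diagonal" contributions balance; (iii) dualize the $n$-sum by $GL(3)$ Voronoi (Goldfeld–Li / Miller–Schmid), turning $\sum_n\lambda_\pi(n,1)e(an/q)(\cdots)$ into a dual sum of length $\sim q^3/N$ with Kloosterman-type twists and a hyper-Kronecker–Bessel integral transform; (iv) dualize the $m$-sum by $GL(2)$ Voronoi, and also apply Poisson in the additive variable; (v) evaluate the resulting exponential integrals by stationary phase, keeping careful track of the phase $n^{-it}$ versus the oscillation from the Bessel kernels; (vi) in the final sum, apply Cauchy–Schwarz to eliminate the $\pi$-coefficients and open the square, then Poisson summation once more to arrive at a count of solutions to a congruence/equation, whose "diagonal" gives the main term and whose off-diagonal is bounded by the large-sieve-type estimate; (vii) collect terms and optimize $Q$, $K$ to extract the numerical exponent $\tfrac1{42}$.

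The main obstacle I anticipate is controlling the final exponential-sum/character-sum after Cauchy–Schwarz: once the $GL(3)$ coefficients are removed, one is left with a sum over the dual variables and the modulus of a product of Gauss/Kloosterman sums against a stationary-phase weight, and the generic off-diagonal term is only barely smaller than the diagonal — the saving $\tfrac1{42}$ reflects exactly how much genuine cancellation one can rigorously extract there after the optimization. A secondary technical hurdle is that $\lambda_\pi(n,1)$ is not multiplicative in a usable way and has no individual bound better than Ramanujan-on-average, so all gains must come from cancellation in sums; thus the Cauchy–Schwarz step must be arranged so that the $\pi$-side is handled purely by the Rankin–Selberg bound $\sum_{n\sim N}|\lambda_\pi(n,1)|^2\ll N^{1+\varepsilon}$, while the arithmetic of the $GL(2)$ form $f$ (Hecke multiplicativity, Deligne's bound) is exploited to reduce the conductor before dualizing. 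Managing the archimedean integrals uniformly in $t$ — in particular tracking the secondary stationary point introduced by conductor lowering — will require the stationary-phase lemmas with explicit dependence on all parameters, which is where most of the bookkeeping lies but where I expect no essential difficulty.
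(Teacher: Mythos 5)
Your proposal follows the same broad blueprint as the paper: approximate functional equation to a dyadic sum, DFI delta-method with a conductor-lowering integral, $GL(3)$ and $GL(2)$ Voronoi, stationary phase, then Cauchy--Schwarz to shed the $\pi$-coefficients followed by Poisson, and finally optimization of $Q$ and $K$. So the architecture is right. But there is one structural observation without which the optimization simply does not close, and your sketch does not show awareness of it. After applying both Voronoi formulae, the $a$-sum produces
$$
\sideset{}{^\star}\sum_{a\bmod q} S(\bar a, n; q)\, e\!\left(\frac{\bar a m}{q}\right),
$$
and the crucial point is that this collapses (up to a divisor-type main term) to $q\, e(-\bar m n/q)$: the Ramanujan sum has been converted into a \emph{single} additive character in the dual $GL(3)$ variable $n$, with no residual Kloosterman-type complexity. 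This is what allows the post-Cauchy Poisson step to beat the diagonal. The paper is explicit that if this sum behaved ``generically'' (i.e., only square-root cancellation in the $a$-sum, leaving a genuine complete sum), then the off-diagonal saving would require $K > t^{4/3}$, contradicting the constraint $K < t$ coming from the zero frequency, and no subconvex exponent would emerge. Your description of the post-Cauchy object as ``a product of Gauss/Kloosterman sums'' and your intuition that ``the generic off-diagonal term is only barely smaller than the diagonal'' both suggest you are picturing the wrong arithmetic object; in fact it is the \emph{collapse} to an additive character, not a marginal large-sieve gain, that drives the final exponent $1/42$.

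Two smaller inaccuracies, which would also cause trouble if you tried to run the bookkeeping: (1) the conductor-lowering device here is not a $GL(2)$-multiplicativity trick but an extra archimedean integral $\frac1K\int V(v/K)(n/m)^{iv}\,dv$ restricting $|n-m| \ll N/K$, so that the delta-symbol modulus can be taken $Q \sim (N/K)^{1/2}$ rather than $N^{1/2}$; (2) the dual length after $GL(3)$ Voronoi is not $q^3/N$ but $\sim (qK)^3/N$, because the test function carries the extra twist $n^{iv}$ with $v \sim K$, and in the generic range $q \sim Q$ this is $\sim K^{3/2}N^{1/2}$. Both of these feed directly into the balance $K < t$ versus $K > t^{1/2}$ and hence into the numerical exponent.
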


Subconvex bounds in the $t$-aspect are known for $L$-functions of degree upto three over the field of rationals (see \cite{W}, \cite{Go} and \cite{Mu-JAMS}). Similar bounds are also known for the Rankin-Selberg $L$-function $L(s,f\times g)$ for two $GL(2)$ forms $f$ and $g$. The $t$-aspect subconvexity for genuine $GL(4)$ $L$-functions remains an important open problem.  Our method of proof is similar to the one given in \cite{Mu-JAMS} and is based on the separation of oscillation technique (as introduced in \cite{Mu}). The key reason for a similar argument to be effective here is the following observation 
$$
\sideset{}{^\star}\sum_{a\bmod{q}} S(\bar{a},n;q)e(\bar{a}m/q)\leadsto qe(-\bar{m}n/q).
$$ 
In other words, the $GL(3)$, $GL(2)$ Voronoi summations together transform the Ramanujan sums $\sum_a^\star e(a(n-m)/q)$ in the delta method to additive characters with respect to the $GL(3)$ variable. As such we save more by applying Poisson summation after Cauchy's inequality. This is the vital structural input in this paper. The same feature helps us to prove a subconvex bound for these $L$-functions in the $GL(2)$ spectral aspect. This will be taken up in another paper. Let us also note that our argument works for Maass forms $f$, after mild alterations. In fact the argument can be extended to Rankin-Selberg convolutions of a general $GL(3)$ and a general $GL(2)$ automorphic forms over $\mathbb{Q}$. 

The main technical heart of \cite{Mu-JAMS} was the analysis of the integral transforms. In this paper we give a simpler analysis of these integrals. This is very much desired as the technique of \cite{Mu-JAMS} leads to the Weyl bound in the case of $GL(2)$ and $GL(1)$ $L$-functions (see \cite{AS}), and now perhaps with this simplification one can go further. 



\section{The set-up}

Let $\lambda_\pi(n,m)$ denote the normalised Fourier coefficients of the form $\pi$ (see Chapter~6 of \cite{G}) and let $\lambda_f(n)$ denote the normalised Fourier coefficients of the form $f$ (see \cite{IK}). Suppose $t>2$, then by approximate functional equation (see \cite{IK}) we have
\begin{align}
\label{afe}
L\left(\tfrac{1}{2}+it,\pi\times f\right)\ll t^{\varepsilon}\; \mathop{\sup}_{N\leq t^{3+\varepsilon}} \frac{|S(N)|}{N^{1/2}}+t^{-2018}
\end{align}
where $S(N)$ is a sum of type 
$$
S(N):=\mathop{\sum\sum}_{n,r=1}^\infty \lambda_\pi(n,r)\lambda_f(n) (nr^2)^{-it}V\left(\frac{nr^2}{N}\right)
$$
for some smooth function $V$ supported in $[1,2]$ and satisfying $V^{(j)}(x)\ll_j 1$.

\begin{remark}[Notation]
In this paper the notation $\alpha\ll A$ will mean that for any $\varepsilon>0$, there is a constant $c$ such that $|\alpha|\leq cA t^\varepsilon$. The dependence of the constant on $\pi$, $f$ and $\varepsilon$, when occurring, will be ignored.
\end{remark}

 Using the Ramanujan bound on average , i.e.
$$
\mathop{\sum\sum}_{n_1^2n_2\leq x}|\lambda(n_1,n_2)|^2\ll x^{1+\varepsilon},
$$ 
we further conclude that 
\begin{align}
\label{new-afe}
L\left(\tfrac{1}{2}+it,\pi\times f\right)\ll \mathop{\sup}_{r\leq t^\theta} \mathop{\sup}_{\frac{t^{3-\theta}}{r^2}\leq N\leq \frac{t^{3+\varepsilon}}{r^2}} \frac{|S_r(N)|}{N^{1/2}}+t^{(3-\theta)/2}
\end{align}
where 
$$
S_r(N):=\mathop{\sum}_{n=1}^\infty \lambda_\pi(n,r)\lambda_f(n) n^{-it}V\left(\frac{n}{N}\right)
$$  Hence to establish subconvexity we need to show cancellation in the sum $S_r(N)$ for $N$ roughly of size $t^{3}$ and $r$ small. We can and shall further normalize $V$, for convenience, so that $\int V(y)\mathrm{d}y=1$.

\subsection{The delta method}
There are three oscillatory factors contributing to the sum $S_r(N)$. Our method is based on separating these oscillations using the circle method. In the present situation we will use a version of the delta method of Duke, Friedlander and Iwaniec. More specifically we will use the expansion (20.157) given in Chapter~20 of \cite{IK}. Let $\delta:\mathbb{Z}\rightarrow \{0,1\}$ be defined by
$$
\delta(n)=\begin{cases} 1&\text{if}\;\;n=0;\\
0&\text{otherwise}.\end{cases}
$$
We seek a Fourier expansion which matches with $\delta$ in the range $[-2M,2M]$. For this we pick $Q=2M^{1/2}$. Then we have
\begin{align}
\label{cm}
\delta(n)=\frac{1}{Q}\sum_{1\leq q\leq Q} \;\frac{1}{q}\; \sideset{}{^\star}\sum_{a\bmod{q}}e\left(\frac{na}{q}\right) \int_\mathbb{R}g(q,x) e\left(\frac{nx}{qQ}\right)\mathrm{d}x
\end{align}
for $n\in\mathbb Z\cap [-2M,2M]$ (and $e(z)=e^{2\pi iz}$). The $\star$ on the sum indicates that the sum over $a$ is restricted by the condition $(a,q)=1$. The function $g$ is the only part in the formula which is not explicitly given. We only need the following two properties (see (20.158) and (20.159) of \cite{IK})
\begin{align}
\label{g-h}
g(q,x) &=1+h(q,x),\;\;\;\text{with}\;\;\;h(q,x)=O\left(\frac{1}{qQ}\left(\frac{q}{Q}+|x|\right)^A\right),\\
\nonumber g(q,x)&\ll |x|^{-A}
\end{align}
for any $A>1$. In particular the second property implies that the effective range of the integral in \eqref{cm} is $[-M^\varepsilon, M^\varepsilon]$.

\subsection{Separation of oscillation}
We apply \eqref{cm} directly to $S_r(N)$ as a device to separate the oscillations of $\lambda(n,r)$ and $\lambda_f(n)n^{-it}$. This by itself does not suffice, and as in \cite{Mu0} and \cite{Mu-JAMS} we need a `conductor lowering mechanism'. For this purpose we introduce an extra integral namely
$$
S_r(N)=\frac{1}{K}\int_{\mathbb R}V\left(\frac{v}{K}\right)\mathop{\sum\sum}_{\substack{n,m=1\\n=m}}^\infty \lambda_\pi(n,r)\lambda_f(m)m^{-it}\:\left(\frac{n}{m}\right)^{iv}V\left(\frac{n}{N}\right)U\left(\frac{m}{N}\right)\mathrm{d}v,
$$
where $t^\varepsilon<K<t^{1-\varepsilon}$ is a parameter which will be chosen optimally later, and  $U$ is a smooth function supported in $[1/2,5/2]$, with $U(x)=1$ for $x\in[1,2]$ and $U^{ (j)}\ll_j 1$. For $n,m\asymp N$, the integral
$$
\frac{1}{K}\int_{\mathbb R}V\left(\frac{v}{K}\right)\left(\frac{n}{m}\right)^{iv}\mathrm{d}v
$$
is negligibly small (i.e. $O_A(t^{-A})$ for any $A>0$) if $|n-m|\gg Nt^{\varepsilon}/K$. Hence we can apply \eqref{cm} with 
\begin{align}
\label{q-choice}
Q=t^\varepsilon\left(\frac{N}{K}\right)^{1/2}
\end{align}
and we get that upto a negligible error term $S_r(N)$ is given by
\begin{align}
\label{S(N)-cm}
&\frac{1}{QK}\int_{\mathbb{R}}W(x)\int_{\mathbb R}V\left(\frac{v}{K}\right)\sum_{1\leq q\leq Q}\;\frac{g(q,x)}{q}\;\sideset{}{^\star}\sum_{a\bmod{q}} \\
\nonumber &\times \mathop{\sum}_{n=1}^\infty \lambda_\pi(n,r)e\left(\frac{an}{q}\right)e\left(\frac{nx}{qQ}\right) n^{iv} V\left(\frac{n}{N}\right)\\
\nonumber &\times \sum_{m=1}^\infty \lambda_f(m)m^{-i(t+v)}e\left(-\frac{am}{q}\right)e\left(-\frac{mx}{qQ}\right)U\left(\frac{m}{N}\right)\mathrm{d}v\mathrm{d}x,
\end{align} 
where $W$ is a smooth bump function with support $[-t^\varepsilon, t^\varepsilon]$

\subsection{Sketch of proof} We end this section with a brief sketch of the proof. For simplicity let us focus on the generic case, i.e. $N=t^3$, $r=1$ and $q\sim Q=t^{3/2}/K^{1/2}$, so that the main object of study is given by
$$
\mathop{\int}_{v\sim K}\sum_{q\sim Q}\;\sideset{}{^\star}\sum_{a\bmod{q}} \sum_{n\sim N}\lambda_\pi(n,1)e\left(\frac{an}{q}\right)n^{iv}\sum_{m\sim N}\lambda_f(m)e\left(-\frac{am}{q}\right)m^{-i(t+v)}.
$$
Our aim is to save $N$ plus a `little more'. First we apply the Voronoi summation formulae to both the $m$ and $n$ sums. In the $GL(2)$ (resp. $GL(3)$) Voronoi we save $(NK)^{1/2}/t$ (resp. $N^{1/4}/K^{3/4}$) and the dual length becomes $m^\star\sim t^2/K$ (resp. $n^\star \sim K^{3/2}N^{1/2}$). Also we save $\sqrt{Q}$ in the $a$ sum and $\sqrt{K}$ in the $v$ integral. Hence in total we have saved $N/t$, and it remains to save $t$ plus a little extra in a sum of the form
$$
\sum_{q\sim Q}\; \sum_{n\sim K^{3/2}N^{1/2}}\lambda_\pi(1,n)\sum_{m\sim t^2/K}\lambda_f(m)\;\mathfrak{C}\:\mathfrak{I}
$$
where $\mathfrak{I}$ is an integral transform which oscillates like $n^{iK}$ with respect to $n$, and the character sum is given by
$$
\mathfrak{C}=\sideset{}{^\star}\sum_{a\bmod{q}} S(\bar{a},n;q)e\left(\frac{\bar{a}m}{q}\right)\leadsto qe\left(-\frac{\bar{m}n}{q}\right).
$$
Next applying the Cauchy inequality we arrive at 
$$
\sum_{n\sim K^{3/2}N^{1/2}}\Bigl|\sum_{q\sim Q}\;\sum_{m\sim t^2/K}\lambda_f(m)\;e\left(-\frac{\bar{m}n}{q}\right)\:\mathfrak{I}\Bigr|^2
$$
where we seek to save $t^2$ plus extra. Opening the absolute value square we apply the Poisson summation formula on the sum over $n$. We save enough in the zero frequency (diagonal contribution) if $t^2Q/K>t^2$ i.e. if $K<t$. On the other hand we save enough in the non-zero frequencies if $K^{3/2}N^{1/2}/K^{1/2}>t^2$ which boils down to $K>t^{1/2}$. 

\begin{remark}
Notice that since the character sum boils down to an additive character we are saving more than the usual. In the usual case we  would have saved $K^{3/2}N^{1/2}/QK^{1/2}$, which would be larger than $t^2$ only if we had $K>t^{4/3}$. This would contradict the upper bound $K<t$.
\end{remark}


\section{Voronoi summation formulae}

\subsection{$GL(2)$ Voronoi}
Consider the sum over $m$ in \eqref{S(N)-cm}. Applying the Voronoi summation formula this transforms into 
$$
\frac{N^{1-i(t-v)}}{q}\sum_{m=1}^\infty \lambda_f(m) e\left(\frac{\bar{a}m}{q}\right) \int_0^\infty U(y) y^{-i(t+v)}e\left(-\frac{Nxy}{qQ}\right)J_{k-1}\left(\frac{4\pi\sqrt{mNy}}{q}\right)\mathrm{d}y
$$
where $k$ is the weight of the form $f$. Extracting the oscillation of the Bessel function we see that the above sum is essentially given by a sum of two terms of the form
\begin{align}
\label{voronoi2}
\frac{N^{3/4-i(t-v)}}{q^{1/2}}\sum_{m=1}^\infty \frac{\lambda_f(m)}{m^{1/4}} e\left(\frac{\bar{a}m}{q}\right) \int_0^\infty U(y) y^{-i(t+v)}e\left(-\frac{Nxy}{qQ}\pm \frac{2\sqrt{mNy}}{q}\right)\mathrm{d}y.
\end{align}
By repeated integration by parts it follows that the integral is negligibly small if $m\gg t^\varepsilon \max\{K,t^2q^2/N\}=:M_0$. In the complementary range the size of the integral is given by the second derivative bound. However we need a more precise analysis of the integral based on the stationary phase expansion. In particular we note that when $Nx/qQ\ll t^{1-\varepsilon}$ then $m\asymp (qt)^2/N$, otherwise the integral is negligibly small.

\subsection{$GL(3)$ Voronoi}
Next we apply the $GL(3)$ Voronoi summation to the sum over $n$ in \eqref{S(N)-cm}. A similar sum occurred in \cite{Mu-JAMS}. The only difference is that there we had $r=1$, but here $r$ is allowed to take small values $r\ll t^\theta$. This only introduces certain cosmetic complications. Let $\{\alpha_i:i=1,2,3\}$ be the Langlands parameters for $\pi$. Let $g$ be a compactly supported smooth function on $(0,\infty)$. We define for $\ell=0,1$
$$
\gamma_\ell(s):=\frac{\pi^{-3s-\frac{3}{2}}}{2}\:\prod_{i=1}^3\frac{\Gamma\left(\frac{1+s+\alpha_i+\ell}{2}\right)}{\Gamma\left(\frac{-s-\alpha_i+\ell}{2}\right)},
$$
set $\gamma_\pm(s)=\gamma_0(s)\mp i\gamma_1(s)$ and let
$$
G_{\pm}(y)=\frac{1}{2\pi i}\int_{(\sigma)}y^{-s}\gamma_\pm(s)\tilde g(-s)ds,
$$
where
$\sigma>-1+\max\{-\text{Re}(\alpha_1),-\text{Re}(\alpha_2),-\text{Re}(\alpha_3)\}$. The $GL(3)$ Voronoi summation formula (see \cite{MS}) is given by
\begin{align*}
&\sum_{n=1}^\infty \lambda_\pi(n,r)e\left(\frac{an}{q}\right)g(n)\\
\nonumber =&q\sum_\pm\sum_{n_1|qr}\sum_{n_2=1}^\infty \frac{\lambda_\pi(n_1,n_2)}{n_1n_2}S(r\bar a, \pm n_2; qr/n_1)G_\pm\left(\frac{n_1^2n_2}{q^3r}\right).
\end{align*}
In the present case we have $g(n)=e\left(nx/qQ\right) n^{iv} V\left(n/N\right)$. Extracting the oscillation of the integral transform (see e.g. Lemma~2.1 of \cite{L}), as in the case of $GL(2)$ above, we essentially arrive at the following expression
\begin{align}
\label{voronoi3}
\frac{N^{2/3+iv}}{qr^{2/3}}\sum_\pm\sum_{n_1|qr}n_1^{1/3}&\sum_{n_2=1}^\infty \frac{\lambda_\pi(n_1,n_2)}{n_2^{1/3}}S(r\bar a, \pm n_2; qr/n_1)\\
\nonumber &\times \int_0^\infty V(z)z^{iv}e\left(\frac{Nxz}{qQ}\pm \frac{3(Nn_1^2n_2z)^{1/3}}{qr^{1/3}}\right)\mathrm{d}z.
\end{align}
By repeated integration by parts we see that the integral is negligibly small if $n_1^2n_2\gg t^\varepsilon((qK)^3r/N+K^{3/2}N^{1/2}rx^3)=:N_0$. We now substitute \eqref{voronoi2} in place of the third line and  \eqref{voronoi3} in place of the second  line of \eqref{S(N)-cm}, to get the object of focus. \\

\section{Reduction of integrals}

\subsection{Simplifying the integrals}
We have transformed the sum in \eqref{S(N)-cm} into a new object with four integrals, which we need to simplify. Consider the integral over $x$ which boils down to 
$$
\int_\mathbb{R} W\left(x\right) g(q,x) e\left(\frac{Nx(z-y)}{qQ}\right)\mathrm{d}x.
$$
Using \eqref{g-h} this splits as the sum of two integrals
$$
\int_\mathbb{R} W\left(x\right) e\left(\frac{Nx(z-y)}{qQ}\right)\mathrm{d}x+
\int_\mathbb{R} W\left(x\right) h(q,x) e\left(\frac{Nx(z-y)}{qQ}\right)\mathrm{d}x,
$$
where in the second integral the weight function $h$ has smaller size. In the first integral by repeated integration by parts we see that it is negligibly small unless $|z-y|\ll t^\varepsilon q/QK$. (We will continue our analysis with the first integral. For the second integral, apart from the fact that the weight function $h$ is of size $1/qQ$, we are able to get a weaker restriction $|z-y|\ll t^\varepsilon/K$ by considering the $v$ integral. As such we obtain much better final bound in this case.)   Writing $z=y+u$ with $|u|\ll t^\varepsilon q/QK$ we arrive at the $y$ integral
\begin{align}
\label{integral}
I(m,n_1^2n_2,q):=\int_0^\infty U(y)y^{-it}e\left(\pm \frac{2\sqrt{mNy}}{q}\pm \frac{3(Nn_1^2n_2(y+u))^{1/3}}{qr^{1/3}}\right)\mathrm{d}y.
\end{align}

\subsection{Size of the integral $I(\dots)$}
\label{integral-norm}
Suppose $K=t^{1-\eta}$ for some $\eta>0$, then we claim that we essentially have $I(\dots)\ll t^{-1/2}$. We will prove that the bound holds in $L^2$ sense.

\begin{Lemma}
\label{lem:integral}
Let 
$$
L=\int W(w) |I(m,N_0w^3,q)|^2\mathrm{d}w
$$ 
where $W$ is a bump function. Then we have $L\ll 1/t$.
\end{Lemma}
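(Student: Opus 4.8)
The plan is to read off the size of $I(m,n_1^2n_2,q)$ in \eqref{integral} from stationary phase, exploiting that its phase is the sum of a ``$GL(2)$-type'' piece — the $\sqrt{mNy}$ oscillation appearing in \eqref{voronoi2} — and a ``$GL(3)$-type'' piece — the cube-root oscillation coming from \eqref{voronoi3} — and that in the relevant ranges of the parameters the latter is a lower-order perturbation of the former. Writing $n_1^2n_2=N_0w^3$, the phase of $I(m,N_0w^3,q)$ is
\[
\Psi_w(y)=-t\log y\pm\frac{2\sqrt{mNy}}{q}\pm\kappa\,w\,(y+u)^{1/3},\qquad \kappa:=\frac{3(NN_0)^{1/3}}{qr^{1/3}}.
\]
Since the $m$-summation has already been truncated, in the $GL(2)$ Voronoi step, to $m\asymp(qt)^2/N$, we have $\sqrt{mN}/q\asymp t$; hence, for the sign of the second term that matters (the other sign makes $\Psi_w'$ non-vanishing on $\operatorname{supp}U$ and $I$ negligible after integration by parts), the first two terms of $\Psi_w$ have a single nondegenerate stationary point $y_0\asymp1$, with second derivative $\asymp t$ there, with second derivative vanishing only near an unrelated point $\asymp 4y_0$, and with all derivatives $\ll t$. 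A short computation using $N_0\asymp N^{1/2}K^{3/2}r$ gives $\kappa\asymp N^{1/2}K^{1/2}/q$, so in the generic case $q\asymp Q=t^\varepsilon(N/K)^{1/2}$ one has $\kappa\asymp K=t^{1-\eta}$.

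In this generic regime the last term of $\Psi_w$ has all derivatives $\ll\kappa\ll t$, so it shifts $y_0$ by only $O(\kappa/t)$ and preserves $\Psi_w''\asymp t$ on a fixed neighbourhood of $y_0$ chosen to exclude $\approx 4y_0$. First I would split $\operatorname{supp}U$ into that neighbourhood, where the second-derivative test bounds the contribution by $\ll t^{-1/2}$, and its complement, where $|\Psi_w'|\asymp t$ while all higher derivatives are $\ll t$, so repeated integration by parts gives a negligible contribution (with the usual care when $y_0$ lies near, or just outside, an endpoint of $\operatorname{supp}U$, handled by a dyadic subdivision in the distance to $y_0$). This gives the pointwise bound $|I(m,N_0w^3,q)|\ll t^{-1/2}$, whence $L\ll t^{-1}\int|W|\ll 1/t$ at once; this is the precise meaning of the informal claim $I(\dots)\ll t^{-1/2}$.

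The statement is nevertheless phrased in $L^2$ because $\kappa$ need not be small for small $q$. If $\kappa\gg t^{1+\varepsilon}$ then $\Psi_w'$ is dominated by the $(y+u)^{1/3}$-term, so $|\Psi_w'|\asymp\kappa$ on $\operatorname{supp}U$ with $\Psi_w^{(\ell)}\ll\kappa$ for every $\ell$, and integration by parts makes $I$, hence $L$, negligible. The genuine obstacle is the borderline band $t^{1-\varepsilon}\ll\kappa\ll t^{1+\varepsilon}$: there the two oscillatory terms of $\Psi_w$ are comparable, of size $\asymp t$, the stationary point $y_w$ of $\Psi_w$ can coalesce with a zero of $\Psi_w''$, and the pointwise bound only yields $|I|\ll t^{-1/3}$. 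Here the average over $w$ is indispensable: after opening the square I would use that $\Psi_w$ is linear in $w$, with $\partial_w\Psi_w(y)=\pm\kappa(y+u)^{1/3}$ of size $\asymp\kappa\asymp t$, to conclude that $w\mapsto\Psi_w''(y_w)$ has only finitely many zeros $w_0$, is $\ll t^{2/3}$ only on intervals of length $O(t^{-2/3})$ about them, and satisfies $|\Psi_w''(y_w)|\asymp t|w-w_0|^{1/2}$ elsewhere; the second- and third-derivative estimates then give $|I(m,N_0w^3,q)|^2\ll\min\{t^{-2/3},(t|w-w_0|^{1/2})^{-1}\}$, and integrating against $W$ yields $L\ll t^{-4/3}+t^{-1}\ll1/t$. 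The point I expect to cost the most care is maintaining uniform control of the derivatives of $\Psi_w$ in this last regime as $y_w$ drifts toward an endpoint of $\operatorname{supp}U$, together with verifying that $\Psi_w'''\gg t$ near $y_w$ whenever $\Psi_w''(y_w)$ is small.
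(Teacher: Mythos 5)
Your proposal diverges from the paper's proof in two essential ways, and the hard case contains a genuine gap.

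The paper's proof hinges on the change of variable $z=y^{1/2}$, which you do not make. After this substitution the $GL(2)$ phase becomes \emph{linear} in $z$, namely $\frac{2\sqrt{mN}}{q}\,z$, and therefore contributes nothing to $P''(z)$. Thus
$$
P''(z)=\frac{t}{\pi z^2}\mp\frac{2(NN_0)^{1/3}w}{3qr^{1/3}z^{4/3}}+\text{lower order},
$$
and $P''\asymp t$ automatically unless the $GL(3)$ coefficient is $\asymp t$ with the compensating sign; there is no three-way cancellation to track and no need to dodge the point $y\approx 4y_0$. Because you stay in the $y$-variable, the $GL(2)$ term $-\frac{\sqrt{mN}}{4q}y^{-3/2}$ survives in $\Psi_w''$ with size $\asymp t$, which is exactly why you are forced into a fold-catastrophe analysis when $\kappa\asymp t$ and the three terms can balance. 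That analysis is not wrong in spirit, but the crucial scaling $|\Psi_w''(y_w)|\asymp t|w-w_0|^{1/2}$ rests on the unverified assertion $\Psi_w'''\gg t$ near a degenerate critical point, which in the $y$-variable again requires showing that three $\asymp t$ quantities do not cancel. You explicitly flag this as the step needing ``the most care,'' and that is precisely where your write-up stops short of a proof. (As a matter of fact this assertion is true, and the cleanest way to see it is — again — via the $z$-substitution, where one computes $P'''(z)\asymp t/z^3$ at a degenerate critical point.)

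The second divergence is in the treatment of the bad case itself. The paper genuinely opens the square: writing $|I|^2$ as a $(y_1,y_2)$-double integral and doing the $w$-integration first, the $w$-phase is linear with coefficient $\frac{3(NN_0)^{1/3}}{qr^{1/3}}\bigl((y_1+u)^{1/3}-(y_2+u)^{1/3}\bigr)\asymp t|y_1-y_2|$ in the bad regime $(NN_0)^{1/3}/(qr^{1/3})\asymp t$. Repeated integration by parts in $w$ then localizes to $|y_1-y_2|\ll t^{\varepsilon-1}$, whence $L\ll 1/t$ in one line. Your proposal says ``after opening the square'' but then reverts to a pointwise bound on $|I(w)|^2$ integrated in $w$; the square is never actually opened, and the argument is a different (and much heavier) use of the $w$-average. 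The paper's route avoids the Airy-type analysis entirely, which is the simplification in the analysis of the integral transforms advertised in the introduction; your attempt shows concretely what that simplification is buying.
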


\begin{proof}
To prove this assertion we make a change of variable $z=y^{1/2}$, so that the phase function in \eqref{integral} reduces to
$$
P=-\frac{t}{\pi}\log z \pm \frac{2\sqrt{mN}z}{q}\pm \frac{3(NN_0(z^2+u))^{1/3}w}{qr^{1/3}}.
$$
Then
$$
P''=\frac{t}{\pi z^2}\mp \frac{2(NN_0)^{1/3}w}{3qr^{1/3}z^{4/3}}+\:\text{smaller order terms}.
$$
For this to be smaller than $t$ in magnitude one at least needs a negative sign in the second term and  $3(NN_0)^{1/3}w/qr^{1/3}\asymp t$. 
Except this case we have $I(\dots)\ll t^{-1/2}$ by the second derivative bound. In the special situation  we have $N_0\asymp (tq)^3r/N$. Opening the absolute value square we arrive at
\begin{align*}
L&\ll \iint U(y_1)U(y_2) \Bigl|\int W(w)e\left(\frac{3w(NN_0)^{1/3}}{qr^{1/3}}((y_1+u)^{1/3}-(y_2+u)^{1/3})\right)\mathrm{d}w\Bigr|\\
& \ll \iint_{|y_1-y_2|\ll 1/t} U(y_1)U(y_2) +t^{-2018}\ll 1/t.
\end{align*}
The lemma follows.
\end{proof}

\section{Cauchy and Poisson}

\subsection{Cauchy inequality}
The expression in \eqref{S(N)-cm} has essentially reduced to
\begin{align*}
&\frac{N^{5/12}}{r^{2/3}}\sum_{1\leq q\leq Q}\;\frac{1}{q^{3/2}}\;\sideset{}{^\star}\sum_{a\bmod{q}} \\
\nonumber &\times \sum_\pm\sum_{n_1|qr}n_1^{1/3}\sum_{n_2\ll N_0/n_1^2} \frac{\lambda_\pi(n_1,n_2)}{n_2^{1/3}}S(r\bar a, \pm n_2; qr/n_1)\\
\nonumber &\times \sum_{m\ll M_0} \frac{\lambda_f(m)}{m^{1/4}} e\left(\frac{\bar{a}m}{q}\right) \;I(m,n_1^2n_2,q).
\end{align*} 
Splitting $q$ in dyadic blocks $q\sim C$, and writing $q=q_1q_2$ with $q_1|(n_1r)^\infty$, $(n_1r,q_2)=1$, we see that the contribution of the $C$-block to the above sum is dominated by
\begin{align}
\label{S(N)-cm-sum}
\frac{N^{5/12}}{r^{2/3}C^{3/2}}\sum_\pm & \sum_{n_1\ll  Cr}n_1^{1/3}\sum_{\frac{n_1}{(n_1,r)}|q_1|(n_1r)^\infty}\:\sum_{n_2\ll N_0/n_1^2} \frac{|\lambda_\pi(n_1,n_2)|}{n_2^{1/3}}  \\
\nonumber &\times \Bigl|\sum_{q_2\sim C/q_1}\;\sum_{m\ll M_0} \frac{\lambda_f(m)}{m^{1/4}} \;\mathcal{C}(\dots)\:I(m,n_1^2n_2,q)\Bigr|,
\end{align} 
where the character sum $\mathcal{C}(\dots)$ is given by
$$
\sideset{}{^\star}\sum_{a\bmod{q}}\:S(r\bar a, \pm n_2; qr/n_1)e\left(\frac{\bar{a}m}{q}\right)=\sum_{d|q}d\mu\left(\frac{q}{d}\right)\mathop{\sideset{}{^\star}\sum}_{\substack{\alpha\bmod{qr/n_1}\\n_1\alpha\equiv -m\bmod{d}}}e\left(\pm\frac{\bar{\alpha}n_2}{qr/n_1}\right). 
$$
To analyse the sum in \eqref{S(N)-cm-sum} further we break the sum over $m$ into dyadic blocks. Then applying Cauchy's inequality and using the Ramanujan bound on average we see that the expression in \eqref{S(N)-cm-sum} is dominated by
\begin{align}
\label{cauchy}
\sup_{M_1\ll M_0} \frac{N^{5/12}N_0^{1/6}}{r^{2/3}C^{3/2}}\sum_\pm & \sum_{n_1\ll  Cr}\frac{1}{n_1^{1/3}}\sum_{\frac{n_1}{(n_1,r)}|q_1|(n_1r)^\infty}\:\Omega^{1/2}  
\end{align} 
where 
\begin{align}
\label{omega}
\Omega=\sum_{n_2\ll N_0/n_1^2} \Bigl|\sum_{q_2\sim C/q_1}\;\sum_{m\sim M_1} \frac{\lambda_f(m)}{m^{1/4}} \;\mathcal{C}(\dots)\:I(m,n_1^2n_2,q)\Bigr|^2,
\end{align}
and $M_1\ll M_0=K+C^2t^2/N$, $N_0=(CK)^3r/N+K^{3/2}N^{1/2}r$. 

\subsection{Poisson summation}
Smoothing out the outer sum in \eqref{omega}, opening the absolute value square and applying the Poisson summation formula we arrive at
\begin{align}
\label{omega-bound}
\Omega\ll \frac{N_0}{n_1^2 M_1^{1/2}}\mathop{\sum\sum}_{q_2,q_2'\sim C/q_1}\:\mathop{\sum\sum}_{m,m'\sim M_1}\:\sum_{n_2\in \mathbb{Z}}\:|\mathfrak{C}|\:|\mathfrak{I}|,
\end{align}
where
\begin{align*}
\mathfrak{C}=\mathop{\sum\sum}_{\substack{d|q\\d'|q'}}dd'\mu\left(\frac{q}{d}\right)\mu\left(\frac{q'}{d'}\right)\mathop{\mathop{\sideset{}{^\star}\sum}_{\substack{\alpha\bmod{qr/n_1}\\n_1\alpha\equiv -m\bmod{d}}}\;\mathop{\sideset{}{^\star}\sum}_{\substack{\alpha'\bmod{q'r/n_1}\\n_1\alpha'\equiv -m'\bmod{d'}}}}_{\bar{\alpha}q_2'-\bar{\alpha}'q_2\equiv n_2\bmod{q_2q_2'q_1r/n_1}}\; 1,
\end{align*}
and
\begin{align*}
\mathfrak{I}=\int W(w)I(m,N_0w,q)\overline{I(m',N_0w,q')}\:e\left(-\frac{N_0n_1n_2w}{q_2q_2'q_1r}\right)\mathrm{d}w.
\end{align*}
By repeated integration by parts we see that the integral is negligibly small if 
$$
|n_2|\gg t^\varepsilon \frac{CN^{1/3}r^{2/3}}{n_1q_1N_0^{2/3}}:=N_2.
$$
Moreover from our analysis in Subsection~\ref{integral-norm} it follows that $\mathfrak{I}\ll t^{-1}$. 

\subsection{The zero frequency}
The zero frequency $n_2=0$ has to be treated differently. Let $\Omega_0$ denotes the contribution of the zero frequency to $\Omega$, and let $\Sigma_0$ be its contribution to \eqref{cauchy}.

\begin{Lemma}
\label{lem:zero}
We have 
\begin{align*}
\Omega_{0}&\ll \frac{N_0M_1^{1/2}C^2r}{n_1^2q_1 t}\;\left(C+M_1\right),
\end{align*}
and 
\begin{align*}
\Sigma_0\ll r^{1/3}N^{1/2}t^{3/2}\:(t^{-1/2+\eta/2}+t^{-3\eta/2}).
\end{align*}
\end{Lemma}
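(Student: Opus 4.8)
\textbf{Proof plan for Lemma~\ref{lem:zero}.}
The plan is to bound $\Omega_0$, the $n_2=0$ contribution to $\Omega$, by first observing that when $n_2=0$ the congruence $\bar\alpha q_2'-\bar\alpha' q_2\equiv 0\bmod{q_2q_2'q_1r/n_1}$ forces $q_2\equiv q_2'$-type relations; I would extract the resulting diagonal constraint on the character sum $\mathfrak{C}$. In fact, reading the modulus $q_2q_2'q_1r/n_1$ against the ranges $q_2,q_2'\sim C/q_1$, the relation $\bar\alpha q_2'\equiv\bar\alpha' q_2$ should pin down $q_2$ in terms of $q_2'$ (and $\alpha,\alpha'$), so the double sum over $q_2,q_2'$ collapses essentially to a single sum, gaining a factor $\sim C/q_1$. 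One then counts the remaining solutions in $\alpha,\alpha',m,m',d,d'$: the divisor sums over $d\mid q$, $d'\mid q'$ contribute $t^\varepsilon$ each, the congruences $n_1\alpha\equiv-m\bmod d$ and $n_1\alpha'\equiv -m'\bmod d'$ tie $\alpha$ (resp.\ $\alpha'$) to $m$ (resp.\ $m'$) modulo $d$, and after the diagonal collapse the free parameters are roughly $q_2'\sim C/q_1$, $\alpha'$, $m\sim M_1$, $m'\sim M_1$, with $\alpha$ determined. Multiplying by the leading factor $N_0/(n_1^2M_1^{1/2})$ from \eqref{omega-bound}, by $|\mathfrak{I}|\ll t^{-1}$, by the two $M_1$-sums, and by the $C/q_1$ from the $q$-sum, and then combining the two regimes where the count is dominated by the $q_2'\sim C/q_1$ range versus the $m,m'$ ranges, should produce the stated
$$
\Omega_0\ll \frac{N_0M_1^{1/2}C^2r}{n_1^2q_1 t}\,(C+M_1).
$$
I would be careful that the factor $r$ enters through the modulus $q_1r/n_1$ and that $n_1\mid q_1r$, so the denominators $n_1^2,q_1$ are exactly as written.

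For $\Sigma_0$, I would substitute this bound for $\Omega_0$ into \eqref{cauchy}: we have
$$
\Sigma_0\ll \frac{N^{5/12}N_0^{1/6}}{r^{2/3}C^{3/2}}\sum_\pm\sum_{n_1\ll Cr}\frac{1}{n_1^{1/3}}\sum_{\frac{n_1}{(n_1,r)}\mid q_1\mid(n_1r)^\infty}\left(\frac{N_0M_1^{1/2}C^2r}{n_1^2q_1 t}\,(C+M_1)\right)^{1/2}.
$$
The inner sum over $q_1$ is a sum of $q_1^{-1/2}$ over a sparse multiplicatively-structured set and converges up to $t^\varepsilon$ (the smallest term dominates), and the sum over $n_1\ll Cr$ of $n_1^{-1/3-1}\cdot n_1^{1/6}\cdot\cdots$ likewise converges, leaving powers of $r$ absorbed into the implied constant. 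This reduces $\Sigma_0$ to
$$
\Sigma_0\ll \frac{N^{5/12}N_0^{2/3}M_1^{1/4}C^{1/2}(C+M_1)^{1/2}}{r^{1/6}t^{1/2}}
$$
up to $t^\varepsilon$, and now I would plug in the worst-case parameters from the generic range: $N\asymp t^3/r^2$ scaled appropriately, $C\sim Q=t^\varepsilon(N/K)^{1/2}$, $K=t^{1-\eta}$, $M_0=K+C^2t^2/N\asymp K$, $N_0=(CK)^3r/N+K^{3/2}N^{1/2}r\asymp K^{3/2}N^{1/2}r$ (the second term dominates in this regime). Tracking the powers of $t$, the term $C+M_1$ splits into the case $M_1\asymp K$, which should give the $t^{-1/2+\eta/2}$ saving against the trivial size $r^{1/3}N^{1/2}t^{3/2}$, and the case $C\asymp t^{(1+\eta)/2}$ dominating, which should give $t^{-3\eta/2}$.

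The main obstacle I anticipate is the combinatorial bookkeeping of the character sum $\mathfrak{C}$ at $n_2=0$: one must correctly see that the congruence modulo $q_2q_2'q_1r/n_1$ with $n_2=0$ does \emph{not} merely force $q_2=q_2'$ (which would be too lossy) but rather, because $\bar\alpha,\bar\alpha'$ range over reduced residues, determines one of $q_2,q_2'$ from the other together with $\alpha,\alpha'$, and to handle the interaction with the $d\mid q$, $d'\mid q'$ Möbius sums and the side congruences $n_1\alpha\equiv -m\bmod d$ without either overcounting or losing a factor. A secondary technical point is justifying that $\mathfrak{I}\ll t^{-1}$ uniformly in the relevant $m,m',q,q'$ ranges for the zero frequency — this follows from Lemma~\ref{lem:integral} applied with the diagonal $m\asymp m'$, $q\asymp q'$ forced here, but one should check the off-diagonal $m,m'$ do not spoil the second-derivative bound. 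Once the count of $\mathfrak{C}$ is pinned down as $\ll t^\varepsilon (C/q_1)\cdot(\text{something of size }M_1\text{ or }1)$, the rest is a mechanical substitution of the generic-range parameters, and the two error terms $t^{-1/2+\eta/2}$ and $t^{-3\eta/2}$ emerge from the two competing pieces of $C+M_1$.
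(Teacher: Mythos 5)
The central step of the proof — extracting the diagonal constraint from the congruence $\bar\alpha q_2'-\bar\alpha'q_2\equiv 0\bmod{q_2q_2'q_1r/n_1}$ — is stated incorrectly in your plan, and in a way that would derail the count. You assert that this "does \emph{not} merely force $q_2=q_2'$ (which would be too lossy) but rather \dots determines one of $q_2,q_2'$ from the other together with $\alpha,\alpha'$." In fact the congruence forces \emph{exactly} $q_2=q_2'$ \emph{and} $\alpha=\alpha'$: reducing modulo $q_2$ gives $\bar\alpha q_2'\equiv 0\bmod{q_2}$, hence $q_2\mid q_2'$ since $(\alpha,q_2)=1$; symmetrically $q_2'\mid q_2$, so $q_2=q_2'$; and then dividing the congruence by $q_2$ gives $\bar\alpha\equiv\bar\alpha'\bmod{qr/n_1}$, so $\alpha=\alpha'$. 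This is not lossy at all — it is the strongest possible conclusion, and it is what lets one collapse $\mathfrak C$ to a single $\alpha$-sum with the two side congruences $n_1\alpha\equiv -m\bmod d$ and $n_1\alpha\equiv -m'\bmod{d'}$, whence $(d,d')\mid(m-m')$. The $(C+M_1)$ factor then arises cleanly: the $(m,m')$-count with $(d,d')\mid m-m'$ is $\ll M_1 + M_1^2/(d,d')$, and the divisor sum $\sum_{d,d'\mid q} dd'/[d,d']\cdot(\ldots)$ produces $M_1 q + M_1^2$, i.e.\ $M_1(C+M_1)$ after the $q_2$-sum. Your version, in which $\alpha'$ remains a free parameter of size $\sim qr/n_1$ and $q_2'$ is merely "determined up to $q_2,\alpha,\alpha'$," has the wrong free-parameter count and does not naturally produce the $(C+M_1)$ structure; the vague appeal to "combining the two regimes" is not a substitute.

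For the $\Sigma_0$ bound, your substitution plan also misses the decisive point. Inserting the $\Omega_0$ bound into \eqref{cauchy} and summing $n_1,q_1$ gives (note the $C^{1/2}$ belongs in the \emph{denominator}, not the numerator as you wrote)
\begin{align*}
\Sigma_0\ll\frac{N^{5/12}N_0^{2/3}M_1^{1/4}(C+M_1)^{1/2}}{r^{1/6}C^{1/2}t^{1/2}},
\end{align*}
which after substituting $N_0\asymp K^{3/2}N^{1/2}r$ produces the expression \eqref{c-deno} with a cross-term $K^{1/2}/C^{1/2}$ that \emph{grows} as $C$ shrinks. Simply "plugging in the worst-case parameters $C\sim Q$" does not control this term; the paper's argument at this point is essential and you omit it: the offending term only arises when $M_1\sim K$, which by the stationary-phase analysis of \eqref{voronoi2} forces $N|x|/CQ\sim t$, i.e.\ pins $C\sim N|x|/(Qt)$, and the surviving $|x|^{-1/2}$ is then absorbed by the $x$-integral, yielding the $t^{-3\eta/2}$ piece. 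Without this mechanism the small-$C$ range is unbounded and the lemma's second estimate does not follow. Your identification of the two error terms $t^{-1/2+\eta/2}$ and $t^{-3\eta/2}$ with "the case $M_1\asymp K$" and "the case $C\asymp t^{(1+\eta)/2}$" is not a derivation of either, and the latter is not even the correct order of magnitude of $Q$ (which is $t^{1+\eta/2}$ for $N\sim t^3$, $K=t^{1-\eta}$). In short: the strategy (diagonalize, count, substitute) is the right shape, but the two load-bearing observations — $q_2=q_2'$, $\alpha=\alpha'$ on the one hand, and the $x$-integral rescue of the $K^{1/2}/C^{1/2}$ term on the other — are the opposite of what you wrote and absent, respectively.
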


\begin{proof}
 In the case $n_2=0$ it follows from the congruence conditions that $q_2=q_2'$ and $\alpha=\alpha'$. So the character sum is bounded as 
$$
\mathfrak{C}\ll \mathop{\sum\sum}_{\substack{d, d'|q}}dd'\:\mathop{\sideset{}{^\star}\sum}_{\substack{\alpha\bmod{qr/n_1}\\n_1\alpha\equiv -m\bmod{d}\\n_1\alpha\equiv -m'\bmod{d'}}}\; 1\ll \mathop{\sum\sum}_{\substack{d, d'|q\\(d,d')|(m-m')}}dd'\:\frac{qr}{[d,d']},
$$
and hence we get
\begin{align*}
\Omega_{0}&\ll \frac{N_0}{n_1^2 M_1^{1/2}t}\mathop{\sum}_{q_2\sim C/q_1}\:qr\mathop{\sum\sum}_{\substack{d, d'|q}}(d,d')\mathop{\sum\sum}_{\substack{m,m'\sim M_1\\(d,d')|m-m'}}\:1\\
&\ll \frac{N_0}{n_1^2 M_1^{1/2}t}\mathop{\sum}_{q_2\sim C/q_1}\:qr\mathop{\sum\sum}_{\substack{d, d'|q}} \;\left(M_1(d,d')+M_1^2\right).
\end{align*}
Trivially executing the remaining sums we get the first part of the lemma.

This bound when substituted in place of $\Omega$ in \eqref{cauchy} yields the bound 
\begin{align}
\label{c-deno}
\frac{N^{3/4}Kr^{1/3}}{t^{1/2}}\left(1+\frac{K^{1/2}}{C^{1/2}}+\frac{C^{1/2}t}{N^{1/2}}\right)\left(K^{1/4}+\frac{(Ct)^{1/2}}{N^{1/4}}\right)
\end{align}
Here if we substitute $\sqrt{N/K}$ in place of $C$ and use the fact that $K=t^{1-\eta}$, then we get $O(r^{1/3}N^{1/2}t^{1+\eta/2})$ as the final bound to \eqref{cauchy}. This takes care of all the terms in \eqref{c-deno} except the single term which has $C^{1/2}$ in the denominator. This occurs only when $M_1\sim K$, which is possible only if $N|x|/CQ\sim t$ (as otherwise the integral in \eqref{voronoi2} is negligibly small). In this case we get 
$$
\frac{N^{3/4}Kr^{1/3}}{t^{1/2}}\:\frac{K^{3/4}}{C^{1/2}}\ll \frac{N^{3/4}K^{7/4}r^{1/3}}{t^{1/2}}\:\frac{(Qt)^{1/2}}{(N|x|)^{1/2}}.
$$ 
The integral over $x$ takes care of the $x^{1/2}$ in the denominator, and we see that the total contribution of this term to \eqref{cauchy} is dominated by $O(r^{1/3}N^{1/2}t^{3/2-3\eta/2})$. The lemma follows.
\end{proof}

\section{Analysis of non-zero frequencies} 

\subsection{The character sum}
Our next lemma gives a bound for $\mathfrak{C}$.

\begin{Lemma}
\label{lem:char-sum}
We have
$$
\mathfrak{C}\ll \frac{q_1^3r}{n_1}\;\mathop{\sum\sum}_{\substack{d_2|(q_2,q_2'n_1+mn_2)\\ d_2'|(q_2',q_2n_1+m'n_2)}}d_2d_2'.
$$
\end{Lemma}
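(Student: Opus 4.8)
The plan is to evaluate the character sum $\mathfrak{C}$ by multiplicativity, reducing to prime-power moduli, and to track carefully which congruences force which divisibilities. Recall
$$
\mathfrak{C}=\mathop{\sum\sum}_{\substack{d|q\\d'|q'}}dd'\mu\left(\tfrac{q}{d}\right)\mu\left(\tfrac{q'}{d'}\right)\mathop{\mathop{\sideset{}{^\star}\sum}_{\substack{\alpha\bmod{qr/n_1}\\n_1\alpha\equiv -m\bmod{d}}}\;\mathop{\sideset{}{^\star}\sum}_{\substack{\alpha'\bmod{q'r/n_1}\\n_1\alpha'\equiv -m'\bmod{d'}}}}_{\bar{\alpha}q_2'-\bar{\alpha}'q_2\equiv n_2\bmod{q_2q_2'q_1r/n_1}}\; 1,
$$
where $q=q_1q_2$, $q'=q_1q_2'$, with $q_1\mid (n_1r)^\infty$ and $(n_1r,q_2)=(n_1r,q_2')=1$. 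The first step is to use the Chinese Remainder Theorem to split the modulus $q_2q_2'q_1r/n_1$ of the outer congruence into coprime pieces: the part supported on primes dividing $n_1r$ (which absorbs $q_1$ and $r/n_1$, and on which $q_2,q_2'$ are units) and the parts supported on $q_2$ and on $q_2'$ separately. On the $q_1r/n_1$-part the congruence $\bar{\alpha}q_2'-\bar{\alpha}'q_2\equiv n_2$ merely pins down a residue class, contributing the factor essentially $q_1 r/n_1$ (this is where the crude factor $q_1^3 r/n_1$ in the statement comes from — one is being wasteful on the ramified part, which is legitimate since $q_1$ is small). On the $q_2$-part, since $q_2'$ is invertible there, the congruence reads $\bar{\alpha}\equiv q_2'^{-1}(n_2+\bar{\alpha}'q_2)\equiv q_2'^{-1}n_2\bmod{q_2}$, i.e. $\alpha$ is determined mod $q_2$; symmetrically $\alpha'$ is determined mod $q_2'$.

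The second step handles the Ramanujan-type sums over $\alpha$ and $\alpha'$. After the reduction, the inner double sum factors (over the coprime constituents of the moduli) into: a Ramanujan sum in $\alpha$ to modulus $q_2$ with the linear constraint $\bar\alpha\equiv q_2'^{-1}n_2$, a Ramanujan sum in $\alpha'$ to modulus $q_2'$ with $\bar{\alpha}'\equiv -q_2^{-1}n_2$, and the pieces on $d,d'$ involving the congruences $n_1\alpha\equiv -m\bmod d$ and $n_1\alpha'\equiv -m'\bmod d'$. On the $q_2$-side one combines $\bar\alpha\equiv q_2'^{-1}n_2\bmod{q_2}$ with $\alpha\equiv -\bar n_1 m\bmod d$ (for $d\mid q_2$, after noting $(n_1,q_2)=1$): consistency of these two congruences on the common modulus $(d,q_2)=d_2$, say, forces $q_2 n_1 \cdot(-\bar n_1 m)\equiv$ — rather, forces $n_2\cdot(-\bar n_1 m)\equiv q_2'$ type relation mod $d_2$, which one rearranges into the divisibility $d_2\mid q_2' n_1 + m n_2$ appearing in the statement; together with $d_2\mid q_2$ this gives $d_2\mid (q_2,\,q_2'n_1+mn_2)$. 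The symmetric computation on the $q_2'$-side yields $d_2'\mid(q_2',\,q_2 n_1+m'n_2)$. Summing the geometric-type contributions over the admissible $d,d'$ and bounding the remaining factors trivially produces the claimed bound $\mathfrak{C}\ll \frac{q_1^3 r}{n_1}\sum\sum_{d_2,d_2'} d_2 d_2'$.

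The main obstacle I expect is the careful bookkeeping of the three-way interaction between the Möbius-weighted divisors $d\mid q$, $d'\mid q'$, the moduli $qr/n_1$, $q'r/n_1$ of the $\alpha,\alpha'$-sums, and the coupling modulus $q_2q_2'q_1r/n_1$ — in particular making sure the factorization into coprime prime-power blocks is legitimate given that $q_1$ can share factors with $r$ and with $n_1$. One has to be honest that $d$ need not be supported away from $q_1$, so the split $d=d_1 d_2$ with $d_1\mid q_1$, $d_2\mid q_2$ must be made and the $d_1$-contribution dumped into the wasteful $q_1^3$-factor along with the $q_1r/n_1$ from the outer congruence. The actual arithmetic (computing each local factor at a prime $p\mid q_2$, where the $\alpha$-sum with a linear constraint $\bar\alpha\equiv \ast\bmod{p^\beta}$ and an independent constraint mod $p^{\min(\beta,\mathrm{ord}_p d)}$ is an elementary exercise) is routine once the structure is set up correctly.
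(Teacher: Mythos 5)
Your proposal matches the paper's proof in all essentials: both factor $\mathfrak{C}$ by the Chinese Remainder Theorem into a ramified piece over the modulus $q_1r/n_1$ (bounded wastefully by $q_1^3r/n_1$, noting the coupling congruence determines $\alpha'$ from $\alpha$ there) and an unramified piece over $q_2,q_2'$, and both extract the divisibilities $d_2\mid(q_2,q_2'n_1+mn_2)$, $d_2'\mid(q_2',q_2n_1+m'n_2)$ by comparing the constraint $\alpha\equiv-m\bar n_1\pmod{d_2}$ (using $(n_1,q_2q_2')=1$) with the reduction mod $q_2$ of the coupling congruence $\bar\alpha q_2'-\bar\alpha'q_2\equiv n_2$. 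The intermediate algebra in your $q_2$-side consistency argument is stated a bit loosely, but the derivation and conclusion are correct and coincide with the paper's.
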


\begin{proof}
The `character sum' $\mathfrak{C}$ can be dominated by a product of two sums $\mathfrak{C}\ll \mathfrak{C}_1\mathfrak{C}_2$ where
\begin{align*}
\mathfrak{C}_1=\mathop{\sum\sum}_{\substack{d_1, d_1'|q_1}}d_1d_1'\mathop{\mathop{\sideset{}{^\star}\sum}_{\substack{\alpha\bmod{q_1r/n_1}\\n_1\alpha\equiv -m\bmod{d_1}}}\;\mathop{\sideset{}{^\star}\sum}_{\substack{\alpha'\bmod{q_1r/n_1}\\n_1\alpha'\equiv -m'\bmod{d_1'}}}}_{\bar{\alpha}q_2'-\bar{\alpha}'q_2\equiv n_2\bmod{q_1r/n_1}}\; 1,
\end{align*}
and
\begin{align*}
\mathfrak{C}_2=\mathop{\sum\sum}_{\substack{d_2|q_2\\d_2'|q_2'}}d_2d_2'\mathop{\mathop{\sideset{}{^\star}\sum}_{\substack{\alpha\bmod{q_2}\\n_1\alpha\equiv -m\bmod{d_2}}}\;\mathop{\sideset{}{^\star}\sum}_{\substack{\alpha'\bmod{q_2'}\\n_1\alpha'\equiv -m'\bmod{d_2'}}}}_{\bar{\alpha}q_2'-\bar{\alpha}'q_2\equiv n_2\bmod{q_2q_2'}}\; 1.
\end{align*}
In the second sum since $(n_1,q_2q_2')=1$, we get $\alpha\equiv -m\bar{n}_1\bmod{d_2}$ and $\alpha'\equiv -m'\bar{n}_1\bmod{d_2'}$. Then using the congruence modulo $q_2q_2'$ we are able to conclude that 
\begin{align*}
\mathfrak{C}_2\ll \mathop{\sum\sum}_{\substack{d_2|(q_2,q_2'n_1+mn_2)\\ d_2'|(q_2',q_2n_1+m'n_2)}}d_2d_2'.
\end{align*}
In the first sum $\mathfrak{C}_1$ the congruence condition determines $\alpha'$ uniquely in terms of $\alpha$, and hence  
\begin{align*}
\mathfrak{C}_1\ll \mathop{\sum\sum}_{\substack{d_1, d_1'|q_1}}d_1d_1'\mathop{\sideset{}{^\star}\sum}_{\substack{\alpha\bmod{q_1r/n_1}\\n_1\alpha\equiv -m\bmod{d_1}}}\; 1\ll \frac{q_1^3r}{n_1}.
\end{align*}
This completes the proof of the lemma.
\end{proof}

We now substitute these bounds in \eqref{omega-bound}. Writing $q_2d_2$ in place of $q_2$ and $q_2'd_2'$ in place of $q_2'$ we get that the contribution of the non-zero frequencies to $\Omega$ is
\begin{align}
\label{omega-bound-2}
\Omega_{\neq 0}\ll \frac{N_0q_1^3r}{n_1^3M_1^{1/2}}\mathop{\sum\sum}_{d_2,d_2'}\:d_2d_2'\mathop{\sum\sum}_{\substack{q_2\sim C/q_1d_2\\q_2'\sim C/q_1d_2'}}\:\mathop{\mathop{\sum\sum}_{m,m'\sim M_1}\sum_{n_2\in \mathbb{Z}-\{0\}}}_{\substack{q_2'd_2'n_1+mn_2\equiv 0\bmod{d_2}\\ q_2d_2n_1+m'n_2\equiv 0\bmod{d_2'}}}\:|\mathfrak{I}|.
\end{align}
We denote by $\Sigma_{\neq 0}$ the term we get by substituting this for $\Omega$ in \eqref{cauchy}.

\subsection{The case of small modulus} In this section we will consider the case where $q\sim C\ll t^{1+\varepsilon}$. Recall that we have $\mathfrak{I}\ll 1/t$ and $n_2\neq 0$. 

\begin{Lemma}
\label{lem:small-c}
The contribution of  $q\sim C\ll t^{1+\varepsilon}$, and $n_2\neq 0$  to \eqref{cauchy} is bounded by
$$
\Sigma_{\neq 0, \:\text{small}}\ll r^{1/2}t^{3/2}N^{1/2}\left(\frac{t^{3-\eta}}{N}+\frac{t^{3/2-\eta/2}}{N^{1/2}}\right).
$$
\end{Lemma}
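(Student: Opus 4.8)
The starting point is the estimate \eqref{omega-bound-2} for $\Omega_{\neq 0}$, into which we feed the bound $|\mathfrak{I}|\ll 1/t$ from Subsection~\ref{integral-norm} and the cut-off $|n_2|\ll N_2$. A short preliminary simplifies the ranges: in the small-modulus regime $q\sim C\ll t^{1+\varepsilon}$ one is well below the crossover $(N/K)^{1/2}$, so the term $K^{3/2}N^{1/2}r$ dominates in $N_0=(CK)^3r/N+K^{3/2}N^{1/2}r$, giving $N_0\asymp K^{3/2}N^{1/2}r$ and hence $N_2\asymp t^\varepsilon C/(n_1q_1K)$. In particular $n_1q_1\ll C/K$, and the whole contribution is empty unless $C\gg K=t^{1-\eta}$; in the remaining window $K\ll C\ll t^{1+\varepsilon}$ one also records the range $M_0=K+C^2t^2/N$ of $m,m'$.

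The first genuine step is the lattice-point count inside \eqref{omega-bound-2}. For fixed $d_2,d_2',q_2,q_2'$ and $n_2\neq 0$, the congruence $q_2'd_2'n_1+mn_2\equiv 0\bmod{d_2}$ confines $m$ to $\ll 1+(d_2,n_2)M_1/d_2$ residue classes, and likewise $m'$ to $\ll 1+(d_2',n_2)M_1/d_2'$ classes. Summing over $0<|n_2|\ll N_2$, then over $q_2\sim C/q_1d_2$ and $q_2'\sim C/q_1d_2'$, and finally over the divisors $d_2,d_2'$ (the divisor bound absorbing gcd's into $t^\varepsilon$), one obtains a bound for the inner multiple sum; multiplying by the prefactor $N_0q_1^3r/(n_1^3M_1^{1/2})$ and by $1/t$ gives $\Omega_{\neq 0}$. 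Because the combination $q_1N_2$ is independent of $q_1$, the resulting bound does not involve $q_1$ at all, so after taking square roots and substituting into \eqref{cauchy} the sum over $q_1$ with $\tfrac{n_1}{(n_1,r)}\mid q_1\mid (n_1r)^\infty$, $q_1\ll C/K$, costs only $t^\varepsilon$, while the sum over $n_1$ converges thanks to the weight $n_1^{-7/3}$.

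It then remains to collect powers. Inserting $N_0\asymp K^{3/2}N^{1/2}r$, $N_2\asymp t^\varepsilon C/(n_1q_1K)$, $K=t^{1-\eta}$ and $C\ll t^{1+\varepsilon}$, the bound for \eqref{cauchy} is increasing in the dyadic variable $M_1$, so one takes $M_1\asymp M_0=K+C^2t^2/N$. The two summands of $M_0$ produce the two summands of the assertion: the $K$-part yields a contribution that, after the coarse bound $N\ll t^{3+\varepsilon}$, is dominated by $r^{1/2}t^{3/2}N^{1/2}\cdot t^{3-\eta}/N$, while the $C^2t^2/N$-part (relevant only once $C\gg (NK/t^2)^{1/2}$, which still lies in the window) is largest at $C\asymp t$ and there equals $r^{1/2}t^{3/2}N^{1/2}\cdot t^{3/2-\eta/2}/N^{1/2}$. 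Summing the two gives the claim.

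The main obstacle is not structural but organizational. The quantities $M_0$, $N_0$ and $N_2$ are each built from two terms depending on $C$ in opposite directions, and the case analysis must be arranged so that every combination collapses to exactly one of the two stated monomials. In particular one has to check that the secondary terms arising from the $1$ in $1+(d_2,n_2)M_1/d_2$ (genuinely present when $d_2$ or $d_2'$ exceeds $M_1$) stay within the same final bound; this uses the a priori constraints $M_1\ll M_0$, $K\ll C$ and the smallness $N_2\ll C/K$. The congruence count and the convergence of the arithmetic sums are, by contrast, routine.
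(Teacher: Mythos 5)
Your proof plan misses the hard half of the paper's argument and rests on two claims that fail in exactly that regime.

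\textbf{The case split is missing.} The paper first pushes the direct congruence count through to \eqref{first-bd}, i.e.\ $\Omega_{\neq 0}\ll \frac{N_0q_1rC^2N_2}{n_1^3M_1^{1/2}t}(C/q_1+M_1)^2$, and then distinguishes two regimes. When $M_1\gg C/q_1$ (or $M_1\asymp (tC)^2/N$), this bound is adequate. But when $M_1\ll C/q_1$ and $M_1\not\asymp (tC)^2/N$, the term $(C/q_1)^2/M_1^{1/2}$ is too large, and the paper switches to an entirely different counting: it sets $q_2d_2n_1+m'n_2=-d_2'h$, trades the $d_2'$-sum for a short $h$-sum of length $H\ll Cn_1/(q_1D')+N/(n_1q_1^2t^2D')$, and only then counts $d_2',m,n_2$. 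Your plan never does this, and the direct count alone is not sufficient in this range — if one just substitutes \eqref{first-bd} into \eqref{cauchy} and evaluates at, say, $N\asymp t^3$, $C\asymp t^{1/2}$, $M_1$ small, the result overshoots the stated bound.

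\textbf{The monotonicity claim is false.} After substituting \eqref{first-bd} into \eqref{cauchy}, the $M_1$-dependence is of the shape $M_1^{-1/4}(C/q_1+M_1)$, which \emph{decreases} when $M_1\ll C/q_1$; so you cannot simply take $M_1\asymp M_0$ and declare victory. It is precisely this small-$M_1$ regime that forces the paper's second sub-case.

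\textbf{The identification $N_0\asymp K^{3/2}N^{1/2}r$ is not legitimate where it matters.} The definition $N_0=(CK)^3r/N+K^{3/2}N^{1/2}r$ is a \emph{maximum} over the $x$-range; the actual truncation length for $n_1^2n_2$ depends on $x$ (equivalently on the dyadic block $M_1$), because $N_0=t^\varepsilon((qK)^3r/N+K^{3/2}N^{1/2}rx^3)$. In the problematic regime $M_1\ll C/q_1$ the paper observes that the correct scale is $N_0\asymp (Ct)^3r/N$, which is strictly smaller. Since $N_2\propto N_0^{-2/3}$ is \emph{derived from} the actual scale of the $n_2$-variable (via integration by parts in $\mathfrak I$), feeding in the larger universal $N_0$ artificially shrinks $N_2$, in fact to below $1$ in examples such as $C\asymp t^{1/2}$, $N\asymp t^3$; this would spuriously make the nonzero-frequency contribution vanish, whereas the correct $N_2\asymp t^{1/2}$ is non-trivial. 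So the argument is not merely lossy there — it is logically invalid.

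Two smaller points: the claim that the bound ``does not involve $q_1$ at all'' is off, since $(C/q_1+M_1)^2$ does depend on $q_1$ (although maximizing at $q_1=1$ restores the desired uniformity); and your residue-class count $\ll 1+(d_2,n_2)M_1/d_2$ is correct (indeed slightly sharper than the paper's $(d_2,q_2'd_2'n_1)(1+M_1/d_2)$, though both agree up to $t^\varepsilon$ after summing over divisors).
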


\begin{proof}
We use the congruences to count the number of $(m,m')$ in \eqref{omega-bound-2}. This comes out to be dominated by 
$$
O((d_2,q_2'd_2'n_1)(d_2',n_2)(1+M_1/d_2)(1+M_1/d_2')).
$$ 
It follows that the contribution of this case to $\Omega_{\neq 0}$ is dominated by
\begin{align*}
\frac{N_0q_1^3r}{n_1^3M_1^{1/2}t}\mathop{\sum\sum}_{d_2,d_2'}\:d_2d_2'\mathop{\sum\sum}_{\substack{q_2\sim C/q_1d_2\\q_2'\sim C/q_1d_2'}}\:\mathop{\sum}_{1\leq n_2\ll  N_2} (d_2,q_2'd_2'n_1)(d_2',n_2) \left(1+\frac{M_1}{d_2}\right)\left(1+\frac{M_1}{d_2'}\right).
\end{align*}
Summing over $n_2$ and $q_2$ we arrive at
\begin{align*}
\frac{N_0q_1^2rCN_2}{n_1^3M_1^{1/2}t}\mathop{\sum\sum}_{d_2,d_2'}\:d_2'\mathop{\sum}_{\substack{q_2'\sim C/q_1d_2'}}\: (d_2,q_2'd_2'n_1)\left(1+\frac{M_1}{d_2}\right)\left(1+\frac{M_1}{d_2'}\right).
\end{align*}
Next summing over $d_2$ we get
\begin{align*}
\frac{N_0q_1^2rCN_2}{n_1^3M_1^{1/2}t}\mathop{\sum}_{d_2'}\:d_2'\mathop{\sum}_{\substack{q_2'\sim C/q_1d_2'}}\: \left(\frac{C}{q_1}+M_1\right)\left(1+\frac{M_1}{d_2'}\right).
\end{align*}
Executing the remaining sums we get
\begin{align}
\label{first-bd}
\frac{N_0q_1rC^2N_2}{n_1^3M_1^{1/2}t}\;\left(\frac{C}{q_1}+M_1\right)^2\ll \frac{q_1r}{n_1^3}\left(\frac{N_0N_2C^4}{M_1^{1/2}tq_1^2}+\frac{N_0N_2C^2M_1^{3/2}}{t}\right).
\end{align}
Suppose $M_1\asymp (tC)^2/N$ or $M_1\gg C/q_1$, then when the above bound is substituted for $\Omega$ in \eqref{cauchy} we get the bound 
$$
r^{1/2}t^{3/2}N^{1/2}\left(\frac{t^{3-\eta}}{N}+\frac{t^{3/2-\eta/2}}{N^{1/2}}\right)
$$
for $C\ll t^{1+\varepsilon}$. In the complementary range when $M_1\ll C/q_1$  and $M_1$ is not of size $(tC)^2/N$, then $N_0\asymp (Ct)^3r/N$. In this case we adopt a different strategy for counting. (Let $d_2\sim D \ll D'\sim d_2'$.) In this case $q_2d_2n_1+m'n_2\ll Cn_1/q_1+M_1N_2\ll Cn_1/q_1+N/n_1q_1^2t^2$. Writing $q_2d_2n_1+m'n_2=-d_2'h$ we see that $h\ll Cn_1/q_1D'+N/n_1q_1^2t^2D':=H$. With this we transform \eqref{omega-bound-2} to
\begin{align}
\label{omega-bound-3}
\frac{N_0q_1^3r}{n_1^3M_1^{1/2}}\mathop{\sum\sum}_{d_2,d_2'}\:d_2d_2'\mathop{\sum\sum}_{\substack{h\ll H\\q_2'\sim C/q_1d_2'}}\:\mathop{\mathop{\sum\sum}_{m,m'\sim M_1}\sum_{n_2\in \mathbb{Z}-\{0\}}}_{\substack{q_2'd_2'n_1+mn_2\equiv 0\bmod{d_2}\\ hd_2'+m'n_2\equiv 0\bmod{d_2}}}\:|\mathfrak{I}|.
\end{align}
Using the second congruence we count the number of $d_2'$ which comes out to be $O((d_2,m'n_2)D'/D)$. The first congruence gives us the number of $m$ which comes out to be $O((n_2,d_2)(1+M_1/D))$. It follows that \eqref{omega-bound-3} is dominated by  
\begin{align*}
\frac{N_0q_1^3r}{n_1^3M_1^{1/2}t}\mathop{\sum}_{d_2\sim D}\:D^{'2}\mathop{\sum\sum}_{\substack{h\ll H\\q_2'\sim C/q_1D'}}\:\mathop{\sum}_{m'\sim M_1}\sum_{0<n_2\ll N_2}\:(m'n_2,d_2)(n_2,d_2)\left(1+\frac{M_1}{D}\right).
\end{align*}
Then summing over $n_2$, $m'$ and $d_2$ we arrive at
\begin{align*}
\frac{N_0q_1^3r}{n_1^3M_1^{1/2}t}\:M_1N_2D\:D^{'2}\mathop{\sum\sum}_{\substack{h\ll H\\q_2'\sim C/q_1D'}}\:\left(1+\frac{M_1}{D}\right),
\end{align*}
which is dominated by
\begin{align}
\label{second-bd}
\frac{N_0q_1r}{n_1^3M_1^{1/2}t}\:M_1N_2C\left(Cn_1+\frac{N}{n_1q_1t^2}\right)\:\left(D+M_1\right).
\end{align}
Now we substitute $D\ll C/q_1$, $M_1\ll C/q_1$ and $C\ll t^{1+\varepsilon}$. When the above bound is substituted in place of $\Omega$ in \eqref{cauchy} we get the bound 
$$
r^{1/2}t^{3/2}N^{1/2}\left(\frac{t^{3/2-\eta}}{N^{1/2}}+t^{-\eta/2}\right).
$$
This is dominated by the previous bound. The lemma follows.
\end{proof}

\subsection{The generic case}
It now remains to tackle the case where $C\gg t^{1+\varepsilon}$ and $n_2\neq 0$. 

\begin{Lemma}
\label{lem:big-c}
The contribution of  $q\sim C\gg t^{1+\varepsilon}$, and $n_2\neq 0$  to \eqref{cauchy} is bounded by
$$
\Sigma_{\neq 0, \:\text{generic}}\ll r^{1/2}t^{3/2}N^{1/2}\;\frac{t^{3\eta/4}N^{1/4}}{t^{11/12}}\ll N^{1/2}t^{3/2 -1/6+3\eta/4+\theta/2}.
$$
\end{Lemma}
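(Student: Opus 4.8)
The plan is to estimate the contribution $\Sigma_{\neq 0,\:\text{generic}}$ by returning to the expression \eqref{omega-bound-2} for $\Omega_{\neq 0}$ in the range $C \gg t^{1+\varepsilon}$, inserting the bounds $\mathfrak{I} \ll 1/t$ and $|n_2| \ll N_2$, and counting the number of solutions $(m, m', n_2)$ to the two congruences $q_2'd_2'n_1 + mn_2 \equiv 0 \bmod d_2$ and $q_2d_2n_1 + m'n_2 \equiv 0 \bmod d_2'$. The key difference from Lemma~\ref{lem:small-c} is that now $C$ is large, so the modulus $C/q_1d_2$ of $q_2$ is the dominant parameter and we expect genuine saving from the $n_2$-sum having length $N_2 = t^\varepsilon CN^{1/3}r^{2/3}/(n_1 q_1 N_0^{2/3})$ which, in the generic regime $N_0 \asymp (CK)^3 r/N$, becomes small; here $K = t^{1-\eta}$. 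First I would substitute $C \asymp Q = t^\varepsilon(N/K)^{1/2}$ (the generic size of the modulus after the delta-method, since larger $C$ is excluded by $q \leq Q$) and $M_1 \ll M_0 = K + C^2 t^2/N$, noting that for $C \sim Q$ one has $C^2 t^2/N \asymp t^2/K = t^{1+\eta}$, which dominates $K$, so $M_1 \ll t^{1+\eta}$.

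The heart of the argument is the same two-step counting as in Lemma~\ref{lem:small-c}: fixing $d_2 \sim D$ and $d_2' \sim D'$ with, say, $D \leq D'$, one uses the congruence mod $d_2'$ to count $m'$ (or $q_2'$, whichever is cheaper given the relative sizes of $M_1$, $N_2$, and $C/q_1 D'$), getting a gcd factor $(d_2', \cdot)$ times $(1 + M_1/D')$ or $(1 + N_2/D')$, and then the congruence mod $d_2$ to count $m$, picking up $(d_2, n_2)$ times $(1 + M_1/D)$. Then one sums trivially over the remaining variables $n_2 \ll N_2$, $q_2 \sim C/q_1 D$, $q_2' \sim C/q_1 D'$, $d_2, d_2'$, absorbing all the gcd contributions into $t^\varepsilon$ by the standard divisor-bound heuristic. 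After collecting powers one substitutes $N_0 = (CK)^3 r/N + K^{3/2}N^{1/2}r$ with the first term dominant in the generic range, $N_2$ in terms of $N_0$, $C \asymp (N/K)^{1/2}$, $K = t^{1-\eta}$, $M_1 \ll t^2/K$, and simplifies to get $\Omega_{\neq 0} \ll$ (something), whence \eqref{cauchy} — which carries the prefactor $N^{5/12} N_0^{1/6} r^{-2/3} C^{-3/2}$ and a sum over $n_1 \ll Cr$ and over $q_1 \mid (n_1 r)^\infty$ of $n_1^{-1/3}\Omega^{1/2}$ — yields, after executing the $n_1$ and $q_1$ sums (which again only contribute $t^\varepsilon$ together with powers of $r$), the claimed bound $r^{1/2} t^{3/2} N^{1/2} \cdot t^{3\eta/4} N^{1/4}/t^{11/12}$. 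Finally, using $N \leq t^{3+\varepsilon}/r^2$ one converts the $N^{1/4}$ into $t^{3/4}/r^{1/2}$ and checks that $3/2 + 3/4 - 11/12 = 3/2 - 1/6$, giving the second displayed form $N^{1/2} t^{3/2 - 1/6 + 3\eta/4 + \theta/2}$.

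I expect the main obstacle to be the bookkeeping in the counting step: one must choose, depending on the dyadic ranges of $D, D', M_1, N_2$, which variable to solve for in each congruence so as to always extract the full modulus as a saving rather than leaving a $1$ from the "$1 + \cdots$" terms dominating — this is exactly the kind of case split that made the proof of Lemma~\ref{lem:small-c} branch into two sub-cases. In the generic regime the favorable feature is that $C$ is large relative to $t$ and relative to the various lengths, so the moduli $d_2 d_2' q_2 q_2' q_1 r/n_1 \asymp C^2 r/(n_1 q_1)$ in the defining congruence for $\mathfrak{C}$ are comfortably bigger than the ranges of the summation variables, which is precisely why the zero-frequency obstruction ($K < t$) and the non-zero-frequency obstruction ($K > t^{1/2}$) of the sketch can be simultaneously satisfied; optimizing $\eta$ against the competing bounds from Lemma~\ref{lem:zero} and Lemma~\ref{lem:small-c} then produces the exponent $1/42$ in the main theorem. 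The only other delicate point is ensuring that the integral bound $\mathfrak{I} \ll 1/t$ from Subsection~\ref{integral-norm}, which was proved in an $L^2$ (mean-square) sense in Lemma~\ref{lem:integral}, is applied through an additional Cauchy--Schwarz in the $w$-integral before summing, so that we may legitimately replace $|\mathfrak{I}|$ by $t^{-1}$ inside the multiple sum; this is routine but must be stated.
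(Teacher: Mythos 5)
Your proposal misses the key new ingredient of this lemma. You propose to ``return to \eqref{omega-bound-2}, insert $\mathfrak{I}\ll 1/t$ and $|n_2|\ll N_2$, and count'' — that is, to run the same argument as in Lemma~\ref{lem:small-c} with the flat integral bound $\mathfrak{I}\ll 1/t$. But that bound is precisely what restricted Lemma~\ref{lem:small-c} to the range $C\ll t^{1+\varepsilon}$: if you carry the same counting into the regime $t^{1+\varepsilon}\ll C\ll Q$, the dominant term in \eqref{first-bd} (after substituting $N_0N_2\asymp C^2Kr/(n_1q_1)$, $M_1\asymp (tC)^2/N$, $N_0^{1/6}\asymp (CK)^{1/2}r^{1/6}/N^{1/6}$ into \eqref{cauchy} and taking $C$ up to $Q=(N/K)^{1/2}$) produces $\Sigma_{\neq 0,\:\text{generic}}\ll r^{1/2}N^{3/4}K^{-1/4}t = r^{1/2}N^{3/4}t^{3/4+\eta/4}$, whereas the lemma claims $r^{1/2}N^{3/4}t^{7/12+3\eta/4}$. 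These differ by a factor $t^{1/6-\eta/2}$, which is a positive power of $t$ whenever $\eta<1/3$; the final optimization takes $\eta=4/21<1/3$, so your bound would miss the target by a genuine power of $t$.

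The paper's proof of Lemma~\ref{lem:big-c} is built around a \emph{sharper} bound for $\mathfrak{I}$ in the regime $C\gg t^{1+\varepsilon}$, not the flat $1/t$ bound. One carries out a stationary phase expansion of $I(m,n_1^2n_2,q)$: when $C\gg t^{1+\varepsilon}$ one has $A=2\sqrt{mN}/q\asymp t$ (forced sign and size), $B=3(Nn_1^2n_2)^{1/3}/(qr^{1/3})\ll t^{1-\eta/2}$, and expanding the critical point in powers of $B/t$ one finds that $I(\dots)$ is $t^{-1/2}$ times a phase of the form $e(Bg_1(A)+B^2g_2(A)+O(B^3/t^2))$. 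Inside $\mathfrak{I}$ one then replaces $n_1^2n_2$ by $N_0w$, and for $n_2\neq0$ checks that the extra oscillation $e(-N_0n_1n_2w/(q_2q_2'q_1r))$ dominates the error $O(NN_0/(C^3rt^2))$ precisely because $C\gg t^{1+\varepsilon}$ and $N\ll t^{3+\varepsilon}$. A change of variables $w=z^3$ and the third-derivative test for the resulting exponential integral then give
$$
\mathfrak{I}\ll \frac{1}{t}\left(\frac{q_2q_2'q_1r}{N_0n_1n_2}\right)^{1/3}\ll \frac{Cr^{1/3}t^{2/3}}{t\,(NN_0)^{1/3}},
$$
which is genuinely smaller than $1/t$ when $C\gg t^{1+\varepsilon}$. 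The paper then notes that this improvement has the effect of replacing the factor $N_0N_2\asymp C(NN_0)^{1/3}r^{2/3}/(n_1q_1)$ appearing in \eqref{first-bd}--\eqref{second-bd} by $C^2r/(n_1q_1t^{1/3})$; rerunning the counting of Lemma~\ref{lem:small-c} with this replacement yields the claimed exponent. Your write-up contains none of this; the ``genuine saving from the $n_2$-sum being short'' that you invoke is already built into Lemma~\ref{lem:small-c} and is not enough here. You also fix $C\asymp Q$ at the outset, but the lemma must handle the entire dyadic range $t^{1+\varepsilon}\ll C\ll Q$, and the intermediate case analysis (the two branches of Lemma~\ref{lem:small-c} depending on $M_1$ vs.\ $C/q_1$) is not automatically resolved by setting $C=Q$.
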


\begin{proof}
In this case we need a better bound for $\mathfrak{I}$. To this end we seek to apply stationary phase analysis to the integral $I(\dots)$ in \eqref{integral}, namely
\begin{align*}
\int_0^\infty U(y)e\left(-\frac{t}{2\pi}\log y \pm A\sqrt{y}\pm B (y+u))^{1/3}\right)\mathrm{d}y,
\end{align*}
where $A=2\sqrt{mN}/q$ and $B=3(Nn_1^2n_2)^{1/3}/qr^{1/3}$.
Since $C\gg t^{1+\varepsilon}$, from \eqref{voronoi2} we conclude that we have plus sign with $A$ and that $A\asymp t$. From \eqref{voronoi3} we conclude that $B\ll t^{1-\eta/2}$. (Otherwise the integrals in \eqref{voronoi2} and \eqref{voronoi3} are negligibly small.) As such the stationary point can be written as $y_0+y_1+y_2+\dots$ with $y_i\ll (B/t)^i$. Explicit calculation yields
$$
y_0=\left(\frac{t}{\pi A}\right)^2,\;\;\; y_1=\mp \frac{4\pi B}{3t}\left(\frac{t}{\pi A}\right)^{8/3},
$$
and in general $y_k=f_k(t,A)(B/t)^k$ for some function $f_k$. 
It follows that $I(m,n_1^2n_2,q)$ is essentially given by
\begin{align*}
\frac{1}{t^{1/2}}y_0^{-it}\:e\left(Bg_1(A)+B^2g_2(A)+O\left(\frac{B^3}{t^2}\right)\right)
\end{align*}
where $g_1(A)=\mp t^{2/3}/3(\pi A)^{2/3}\ll 1$ and $g_2(A)\ll 1/t$. Also note that $B\asymp (NN_0)^{1/3}/qr^{1/3}$. It follows that the integral $\mathfrak{I}$ is given by 
\begin{align*}
\frac{1}{t}\int W(y) &e\left(\left(Bg_1(A)-B'g_1(A')\right)+\left(B^2g_2(A)-B^{'2}g_2(A')\right)+O\left(\frac{NN_0}{C^3rt^2}\right)\right)\\
&\times e\left(-\frac{N_0n_1n_2y}{q_2q_2'q_1r}\right)\mathrm{d}y
\end{align*}
where in $B$, $B'$ we replace $n_1^2n_2$ by $N_0y$. Since $n_2\neq 0$ we get 
$$
\frac{N_0n_1n_2y}{q_2q_2'q_1r}\gg \frac{N_0n_1}{C^2r}\gg t^\varepsilon \frac{NN_0}{C^3rt^2}
$$
as $C\gg t^{1+\varepsilon}$ and $N\ll t^{3+\varepsilon}$. Making a change of variable $y=z^3$ and using the third derivative bound for the exponential integral we get
\begin{align*}
\mathfrak{I}\ll \frac{1}{t}\left(\frac{q_2q_2'q_1r}{N_0n_1n_2}\right)^{1/3}\ll \frac{Cr^{1/3}t^{2/3}}{t(NN_0)^{1/3}}.
\end{align*}
In our bounds for $\Omega$ (see \eqref{first-bd} and \eqref{second-bd}), we had the factor $N_0N_2$ which boils down to $C(NN_0)^{1/3}r^{2/3}/n_1q_1$ by substituting the value of $N_2$. Now when we incorporate the new bound for the integral, this factor is replaced by $C^2r/n_1q_1t^{1/3}$. Making this replacement in the proof of Lemma~\ref{lem:small-c}, we get Lemma~\ref{lem:big-c}. 
\end{proof}

We now pull together the bounds from Lemma~\ref{lem:zero}, Lemma~\ref{lem:small-c} and Lemma~\ref{lem:big-c} to get that
$$
\frac{S_r(N)}{N^{1/2}t^{3/2}} \ll r^{1/3}\:(t^{-1/2+\eta/2}+t^{-3\eta/2})+r^{1/2}\left(\frac{t^{3-\eta}}{N}+\frac{t^{3/2-\eta/2}}{N^{1/2}}\right)+r^{1/2}\frac{t^{3\eta/4}N^{1/4}}{t^{11/12}},
$$
where $t^{3-\theta}/r^2<N<t^3/r^2$. It follows that
$$
\frac{S_r(N)}{N^{1/2}t^{3/2}} \ll t^{-1/2+\eta/2+\theta/3}+t^{-3\eta/2+\theta/3}+t^{7\theta/2-\eta}+t^{2\theta-\eta/2}+t^{-1/6+3\eta/4},
$$
for $r\ll t^\theta$.
Hence we need $\eta>7\theta/2$, and consequently the third term dominates the second and the fourth terms. Also we see that the last term dominates the first. Hence the above bound reduces to
$$
\frac{S_r(N)}{N^{1/2}t^{3/2}} \ll t^{7\theta/2-\eta}+t^{-1/6+3\eta/4}.
$$
The optimal choice for $\eta$ is given by $\eta=2\theta+2/21$. Plugging this in \eqref{new-afe} we get that 
$$
L(1/2+it,\pi\times f)\ll t^{3/2+3\theta/2-2/21}+t^{3/2-\theta/2},
$$
and with the optimal choice $\theta=1/21$ we obtain the bound given in Theorem~\ref{mthm}.
 \\



\end{document}